\documentclass[12pt]{amsart}
\usepackage{graphicx} % Required for inserting images
\usepackage[english]{babel}
\usepackage{times}
\usepackage{amsmath,amsfonts,amssymb,amsopn,amscd,amsthm}
\usepackage{pdfpages}
\usepackage[french]{nomencl}
\usepackage{mathrsfs}
\usepackage{euscript}
\usepackage{graphicx,xspace}
\usepackage{epsfig}
\usepackage{enumerate} 
\usepackage{enumitem}
\usepackage{xcolor}
\usepackage[all]{xypic}
\usepackage{url}

\usepackage[T1]{fontenc}
\usepackage[utf8]{inputenc}
\usepackage{hyperref}
\usepackage{psfrag}
\usepackage{bm}
\usepackage{youngtab}
\usepackage{tikz}

\usetikzlibrary{snakes}
\usepackage[all]{xy}
\usepackage{varioref}
\theoremstyle{plain}
\newcounter{chapter}
\newtheorem{definition}{Definition}[chapter]
\newtheorem{theoreme}[definition]{Theorem}

\newtheorem{prop}[definition]{Proposition}
\newtheorem{ex}[definition]{Example}
\newtheorem{c-ex}[definition]{Counter-example}
\newtheorem{cor}[definition]{Corollary}
\newtheorem{lem}[definition]{Lemma}

\theoremstyle{definition}

\theoremstyle{remark}

\DeclareUnicodeCharacter{03C7}{\chi}
\DeclareUnicodeCharacter{B2}{^{2}}
\DeclareMathOperator{\supp}{supp}

\author[O. Tout]{Omar Tout}
\address{Department of Mathematics, College of Science, Sultan Qaboos University, P. O Box 36, Al Khod 123, Sultanate of Oman}
\email{o.tout@squ.edu.om}

\title{A note on the structure coefficients of the centraliser algebra}

%\keywords{$m$-centraliser algebra, structure constants}

\begin{document}

\maketitle

\begin{abstract} In this note we generalize the definition of partial permutations of Ivanov and Kerov and we build a universal algebra which projects onto the $m$-centraliser algebra defined by Creedon in \cite{Creedon}. We use it to present a new proof for the polynomiality property of the structure coefficients of the $m$-centraliser algebra and to obtain upper bounds for the polynomial degrees.
\end{abstract}

\section{Introduction}

Throughout this paper $m$ and $n$ will be integers satisfying $0\leq m\leq n.$ As usual $[n]$ will denote the set of integers $\lbrace 1,2,\cdots , n \rbrace$ and $[0]$ will be considered to be the emptyset $\emptyset.$ We denote by $\mathcal{S}_n$ the symmetric group on $[n]$ and by $Stab_n(m)$ the subgroup of $\mathcal{S}_n$ which contains all the permutations of $[n]$ which fixes all the elements of $[m].$ 

For $\ast$ a formal symbol, we define an \textit{$m$-marked cycle shape}\footnote{The definition in \cite{Creedon} does not allow cycles of form $(\ast).$} to be a finite collection of cycles with entries in $[m]\cup \lbrace \ast \rbrace$ such that the multiset of entries among the cycles equals $[m]\cup \lbrace \ast^r \rbrace$ for some $r\in \mathbb{Z}_{\geq 0},$ in particular each element of $[m]$ appears precisely once. The \textit{size} of an $m$-marked cycle shape $\rho,$ denoted $|\rho|,$ is the size of its multiset of entries and $m_1(\rho)$ will denote the number of cycles of $\rho$ of the form $(\ast)$ (cycles of form $(a),$ with $a\in [m],$ are not counted). For example, if $\sigma=(1,\ast,3)(2)(4,5)(\ast)(\ast,\ast)$ then $|\sigma|=9$ and $m_1(\sigma)=1.$ The set of $m$-marked cycle shapes with size $n$ will be denoted $\mathcal{P}^m_{n}.$ The \textit{$m$-marked cycle type} of a permutation of $n$ is the $m$-marked cycle shape obtained from its cycle notation by replacing all the integers in $[n]\setminus [m]$ with $\ast$ symbol. For example, the $2$-marked cycle type of $(1,3,5)(2)(4,7)(6)$ is $(1,\ast,\ast)(2)(\ast,\ast)(\ast).$

In \cite{Creedon}, Creedon defines the \textit{$m$-centraliser algebra} as 
\begin{equation}
    Z_{n,m}:=\lbrace z\in \mathbb{Z}[\mathcal{S}_n] \mid \tau z=z \tau \text{ for all } \tau\in Stab_n(m) \rbrace.
\end{equation}
The set of $m$-class sums $\lbrace K_\lambda \mid \lambda\in \mathcal{P}^m_{n} \rbrace,$ where $K_\lambda$ is the formal sum of all permutations of $[n]$ with $m$-marked cycle shape $\lambda,$ is a $\mathbb{Z}$-basis for $Z_{n,m}.$ As such, for any $\lambda,\delta \in \mathcal{P}^m_{n},$ we have
\begin{equation}\label{MainEq}
    K_\lambda K_\delta=\sum_{\rho \in \mathcal{P}^m_{n}}c_{\lambda,\delta}^\rho(n)K_\rho
\end{equation}
with structure coefficients $c_{\lambda,\delta}^\rho(n)\in \mathbb{N}\cup \lbrace 0\rbrace.$ When $m=0,$ $Stab_n(0)=\mathcal{S}_n,$ $\mathcal{P}^0_{n}$ may be identified with the set of all partitions of $n,$ and $Z_{n,0}$ is the center of the symmetric group algebra. A proof of the polynomiality property for the structure coefficients of $Z_{n,0}$ first appears in \cite{FarahatHigman}. In \cite{IvanovKerov}, by introducing partial permutations, Ivanov and Kerov give a combinatorial proof for the polynomiality property. In \cite[Theorem 3.0.3]{Creedon}, Creedon shows, using the Farahat and Higman approach, that the structure coefficients $c_{\lambda,\delta}^\rho(n)$ of $Z_{n,m}$ are polynomials in $n.$ The goal of this paper is to develop an approach similar to that of Ivanov and Kerov in order to prove Creedon's Theorem. 

\section{The semi-group of $m$-partial permutations}

We define an \textit{$m$-partial permutation} of $[n]$ to be a pair $(d,\omega)$ where $[m]\subseteq d\subseteq [n]$ and $\omega \in \mathcal{S}_d.$ Let $\mathfrak{P}^m_n$ denote the set of all $m$-partial permutations of $[n].$ We have:
\begin{equation*}
\vert \mathfrak{P}^m_n\vert=\sum_{k=0}^{n-m}{n-m \choose k}(m+k)!.
\end{equation*}

If $(d,\omega)\in \mathfrak{P}^m_n,$ we denote by $\widetilde{\omega}$ the permutation of $[n]$ defined as follows:
$$\widetilde{\omega}(a)=
\left\{
\begin{array}{ll}
  \omega(a) & \qquad \mathrm{if}\quad a\in d, \\
  a & \qquad \mathrm{if}\quad a\in [n]\setminus d. \\
 \end{array}
 \right.$$
The product of two elements $(d_1,\omega_1)$ and $(d_2,\omega_2)$ of $\mathfrak{P}^m_n,$ is defined as follows:
\begin{equation*}(d_1,\omega_1)\cdot (d_2,\omega_2)=(d_1\cup d_2,\widetilde{\omega}_{1_{|d_1\cup d_2}}\circ \widetilde{\omega}_{2_{|d_1\cup d_2}}),\end{equation*}
where $\widetilde{\omega}_{1_{|d_1\cup d_2}}$ and $\widetilde{\omega}_{2_{|d_1\cup d_2}}$ denote respectively the restrictions of $\widetilde{\omega}_1$ and $\widetilde{\omega}_2$ on $d_1\cup d_2.$ This product gives $\mathfrak{P}^m_n$ a semi-group structure with identity element $([m],e_{[m]}),$ where $e_{[m]}$ is the identity permutation of $[m].$

\subsubsection{Conjugacy classes of $\mathfrak{P}^m_n$}

It would be clear that the definition of $m$-marked cycle type for permutations of $\mathcal{S}_n$ can be extended naturally to $m$-partial permutations of $[n].$ For example, the $5$-marked cycle type of $(\{ 1,2,3,4,5,7,9,10,15 \} , (1,9)(3,7,10)(5,15,4)(2)(3))$ is $$(1,\ast)(3,\ast,\ast)(5,\ast,4)(2)(3).$$
Let $Stab_n(m)$ act on $\mathfrak{P}^m_n$ by:
$$\sigma\cdot (d,\omega)=(\sigma(d),\sigma\circ \omega\circ \sigma^{-1})$$ 
for any $\sigma\in Stab_n(m)$ and $(d,\omega)\in \mathfrak{P}^m_n.$ The orbits of this action will be denoted the conjugacy-classes of $\mathfrak{P}^m_n.$ Two $m$-partial permutations $(d_1,\omega_1)$ and $(d_2,\omega_2)$ belong to the same orbit if and only if there exists $\sigma\in Stab_n(m)$ such that $(d_2,\omega_2)=(\sigma(d_1),\sigma\circ \omega_1\circ \sigma^{-1}),$ that is $d_2=\sigma(d_1)$ and $\omega_2=\sigma\circ \omega_1\circ \sigma^{-1}.$ Therefore, $(d_1,\omega_1)$ and $(d_2,\omega_2)$ belong to the same orbit if and only if they both have the same $m$-marked cycle type. Thus, the conjugacy classes of $\mathfrak{P}^m_n$ can be indexed by the elements of $\mathcal{P}^m_{\leq n},$ the set of $m$-marked cycle shapes with size at most $n.$ For an $m$-marked cycle shape $\rho\in \mathcal{P}^m_{\leq n},$ the conjugacy-class of  $\mathfrak{P}^m_n$ associated to $\rho$ is denoted by $A_\rho(n)$ and defined as follows:
$$A_\rho(n):=\lbrace (d,\omega)\in \mathfrak{P}^m_n \vert\text{$\rho$ is the $m$-marked cycle type of $(d,\omega)$} \rbrace.$$

For $\rho\in \mathcal{P}^m_{\leq n},$ we define $\underline{\rho}_n$ to be the $m$-marked cycle shape of size $n$ obtained from $\rho$ by adding $n-|\rho|$ cycles of form $(\ast)$ to $\rho.$ For example, if $m=3,$ $\lambda=(1,\ast)(2)(3)$ and $\delta=(2,\ast)(1,\ast,3),$ then $\underline{\lambda}_7=(1,\ast)(2)(3)(\ast)(\ast)(\ast)$ and $\underline{\delta}_7=(2,\ast)(1,\ast,3)(\ast)(\ast).$ To simplify notation, an element $([n],f)$ in $A_{\underline{\rho}_n}(n)$ will be represented only by $f.$

\begin{prop}\label{prop:taille_A_rho_n}
Let $\rho\in \mathcal{P}^m_{\leq n},$ we have :
$$\vert A_{\rho}(n)\vert=\begin{pmatrix}
n-|\rho|+m_1(\rho) \\
m_1(\rho)
\end{pmatrix}\cdot \vert A_{\underline{\rho}_n}(n)\vert.$$
\end{prop}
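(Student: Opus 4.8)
The idea is to build an explicit bijection that exhibits each element of $A_\rho(n)$ as obtained from an element of $A_{\underline{\rho}_n}(n)$ by a choice of which of the ``floating'' fixed points to forget. Recall that an element $(d,\omega)\in A_{\underline{\rho}_n}(n)$ is, in the simplified notation, a permutation $f$ of $[n]$ whose $m$-marked cycle type is $\underline{\rho}_n$; among its cycles there are exactly $m_1(\rho)$ cycles of the form $(\ast)$ \emph{recorded in $\rho$ itself} together with $n-|\rho|$ extra cycles of the form $(\ast)$ coming from the elements of $[n]\setminus d$. In total $f$ has $n-|\rho|+m_1(\rho)$ fixed points whose label lies in $[n]\setminus[m]$, and these are all on an equal footing: $Stab_n(m)$ permutes them transitively. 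To produce an element of $A_\rho(n)$ from $f$ one must designate $m_1(\rho)$ of these fixed points as ``genuine'' (they go into $\omega$) and declare the remaining $n-|\rho|$ of them to lie outside the domain $d$.

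The plan is therefore as follows. First I would set up the map $\Phi$ from pairs $(f,S)$, where $f\in A_{\underline{\rho}_n}(n)$ and $S$ is an $m_1(\rho)$-element subset of the set $\mathrm{Fix}_{>m}(f)$ of fixed points of $f$ lying in $[n]\setminus[m]$, to $\mathfrak{P}^m_n$, by sending $(f,S)$ to the $m$-partial permutation $(d,\omega)$ with $d=[m]\cup\{a\in[n] : f(a)\neq a\}\cup S$ and $\omega=f_{|d}$. One checks immediately that the $m$-marked cycle type of $\Phi(f,S)$ is exactly $\rho$: the non-trivial cycles are untouched, the cycles $(b)$ with $b\in[m]$ are untouched, and exactly the $m_1(\rho)$ chosen starred fixed points survive in $\omega$. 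Hence $\Phi$ lands in $A_\rho(n)$. Conversely, given $(d,\omega)\in A_\rho(n)$, I would recover $f:=\widetilde{\omega}$ (its $m$-marked cycle type is $\underline{\rho}_n$ since completing to $[n]$ adds exactly $n-|\rho|$ starred fixed points) and recover $S$ as the set of fixed points of $\omega$ in $d\setminus[m]$; this is the inverse of $\Phi$, so $\Phi$ is a bijection.

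Counting then finishes the proof. The fibre of the forgetful map $(f,S)\mapsto \Phi(f,S)=(d,\omega)$ over a fixed element of $A_\rho(n)$ is a single point, so $\Phi$ is a genuine bijection and
\[
\vert A_\rho(n)\vert=\#\{(f,S)\}=\sum_{f\in A_{\underline{\rho}_n}(n)}\binom{\#\mathrm{Fix}_{>m}(f)}{m_1(\rho)}.
\]
The only thing left is to observe that $\#\mathrm{Fix}_{>m}(f)=n-|\rho|+m_1(\rho)$ for \emph{every} $f\in A_{\underline{\rho}_n}(n)$: indeed $\underline{\rho}_n$ has $m_1(\underline{\rho}_n)=m_1(\rho)+(n-|\rho|)$ cycles of the form $(\ast)$ by the very definition of $\underline{\rho}_n$, and the number of starred fixed points of a permutation depends only on its $m$-marked cycle type. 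Since the binomial coefficient is then constant over the sum, we get
\[
\vert A_\rho(n)\vert=\binom{n-|\rho|+m_1(\rho)}{m_1(\rho)}\cdot\vert A_{\underline{\rho}_n}(n)\vert,
\]
as claimed.

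I do not expect a serious obstacle here; the one point that needs genuine care is the bookkeeping of starred fixed points, making sure not to double-count the $m_1(\rho)$ cycles $(\ast)$ already present in $\rho$ against the $n-|\rho|$ cycles $(\ast)$ created by passing from $d$ to $[n]$, and checking that the map is well defined independently of representatives. Everything else is a routine verification that $\Phi$ and its proposed inverse are mutually inverse.
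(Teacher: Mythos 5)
Your proposal is correct and is essentially the paper's own argument: the paper uses the forgetful map $(d,\omega)\mapsto\widetilde{\omega}$ onto $A_{\underline{\rho}_n}(n)$ and counts each fibre as a choice of $m_1(\rho)$ of the $n-|\rho|+m_1(\rho)$ starred fixed points, which is exactly your bijection $(f,S)\mapsto(d,\omega)$ read fibrewise. The bookkeeping you flag (not double-counting the $(\ast)$-cycles of $\rho$ against those created in passing to $\underline{\rho}_n$) is handled correctly in your count of $\#\mathrm{Fix}_{>m}(f)$.
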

\begin{proof} 
Consider the following map $$\begin{array}{ccccc}
\Theta & : & A_{\rho}(n) & \to & A_{\underline{\rho}_n}(n) \\
& & (d,\omega) & \mapsto & \widetilde{\omega}. \\
\end{array}$$
For any $v,v^{'}\in A_{\underline{\rho}_n}(n)$ with $v\neq v^{'},$ we have $\Theta^{-1}(v)\cap \Theta^{-1}(v^{'})=\emptyset,$ therefore $$\vert A_{\rho}(n)\vert=\sum_{v\in A_{\underline{\rho}_n}(n)}\vert \Theta^{-1}(v)\vert.$$
For $v\in A_{\underline{\rho}_n}(n),$ denote by $\supp^m(v)$ the following set:
$$\supp^m(v)=[m]\cup \lbrace x\in [n]~\vert~ v(x)\neq x\rbrace.$$
Since the $m$-marked cycle type of $v$ is $\underline{\rho}_n,$ we have $\vert \supp^m(v)\vert=\vert\rho\vert-m_1(\rho)$. But
$$\Theta^{-1}(v)=\lbrace (d,\omega)\in A_{\rho}(n) \text{ such that } \widetilde{\omega}=v\rbrace.$$ 
To build a partial permutation $(d,\omega)\in \Theta^{-1}(v),$ $\omega$ should coincide with $v$ on $\supp^m(v)$ (for this to happen, $\supp^m(v)$ should be a subset of $d$) and we choose $m_1(\rho)=\vert d\vert-\vert \supp^m(v)\vert$ fixed elements for $\omega$ from among the $n-|\rho|+m_1(\rho)$ fixed elements of $v.$ Therefore, $\vert \Theta^{-1}(v)\vert=\begin{pmatrix}
n-|\rho|+m_1(\rho) \\
m_1(\rho)
\end{pmatrix}$.
\end{proof}

\begin{ex}
Let $\lambda=(1,\ast,\ast)(2)(\ast),$ then $|\lambda|=5,$ $m_1(\lambda)=1,$
$$A_\lambda (6)=\lbrace (1,3,4)(2)(5); (1,3,4)(2)(6);
(1,3,5)(2)(4); (1,3,5)(2)(6);$$
$$(1,3,6)(2)(4); (1,3,6)(2)(5);
(1,4,5)(2)(3); (1,4,5)(3)(6);$$
$$(1,4,6)(2)(3); (1,4,6)(2)(5);
(1,4,3)(2)(5); (1,4,3)(2)(6);$$
$$(1,5,3)(2)(4); (1,5,3)(2)(6);
(1,5,4)(2)(3); (1,5,4)(2)(6);$$
$$(1,5,6)(2)(3); (1,5,6)(2)(4);
(1,6,3)(2)(4); (1,6,3)(2)(5);$$
$$(1,6,4)(2)(3); (1,6,4)(2)(5);
(1,6,5)(2)(3); (1,6,5)(2)(4)
\rbrace$$
and 
$$A_{\underline{\lambda}_6} (6)=\lbrace (1,3,4)(2)(5)(6); 
(1,3,5)(2)(4)(6); $$
$$(1,3,6)(2)(4)(5); 
(1,4,5)(2)(3)(6); $$
$$(1,4,6)(2)(3)(5);
(1,4,3)(2)(5)(6);$$
$$(1,5,3)(2)(4)(6);
(1,5,4)(2)(3)(6);$$
$$(1,5,6)(2)(3)(4);
(1,6,3)(2)(4)(5);$$
$$(1,6,4)(2)(3)(5);
(1,6,5)(2)(3)(4)
\rbrace$$

It is easy then to verify that 
$$24=\vert A_{\lambda}(6)\vert=\begin{pmatrix}
6-|\lambda|+m_1(\lambda) \\
m_1(\lambda)
\end{pmatrix}\cdot \vert A_{\underline{\lambda}_6}(6)\vert=\begin{pmatrix}
6-5+1 \\
1
\end{pmatrix}\cdot 12.$$
\end{ex}

Let us extend by linearity the action of $Stab_n(m)$ on $\mathfrak{P}^m_n$ to $\mathcal{B}^m_n:=\mathbb{C}[\mathfrak{P}^m_n],$ the semi-group algebra of $\mathfrak{P}^m_n,$ and consider 
$$\mathcal{A}^m_n={(\mathcal{B}^m_n)}^{Stab_n(m)}:=\lbrace b\in \mathcal{B}^m_n \text{ such that for any $\sigma\in Stab_n(m),$ } \sigma\cdot b=b \rbrace,$$ the sub-algebra of invariant elements of $\mathcal{B}^m_n.$
The surjective homomorphism $$\begin{array}{ccccc}
\psi & : & \mathfrak{P}^m_n & \to & \mathcal{S}_n \\
& & (d,\omega) & \mapsto & \widetilde{\omega} \\
\end{array}$$
can be linearly extended to a surjective algebras homomorphism
$\psi:\mathcal{B}^m_n\rightarrow \mathbb{C}[\mathcal{S}_n]$.
%L'écriture générique d'un élément $b\in \mathcal{B}_n$ est :
%$$b=\sum_{k=0}^{n}\sum_{\vert d\vert=k}\sum_{\omega\in \mathcal{S}_d}b_{d,\omega}(d,\omega),$$
%où les $b_{d,\omega}$ sont des nombres complexes.
For any $\sigma\in Stab_n(m)$ and any $b\in \mathcal{B}^m_n,$ we have:
$$\psi (\sigma\cdot b)=\sigma\cdot \psi(b)$$
(this is due to the fact that $\widetilde{\sigma^{-1} \omega\sigma}=\sigma^{-1} \widetilde{\omega}\sigma$).Therefore
$$\psi(\mathcal{A}^m_n)=(\psi(\mathcal{B}^m_n))^{Stab_n(m)}=Z_{n,m}.$$
For $\rho\in \mathcal{P}^m_{\leq n},$ define
$${\bf A}_{\rho}(n):=\sum_{(d,\omega)\in A_{\rho}(n)}(d,\omega).$$
The elements $({\bf A}_\rho(n))_{\rho \in \mathcal{P}^m_{\leq n}}$ form a basis for $\mathcal{A}^m_n.$ One of the important properties of $\psi$ is that for any $\rho \in \mathcal{P}^m_{\leq n},$ we have:
\begin{equation}\label{imagepsi}
\psi({\bf A}_\rho(n))=\begin{pmatrix}
n-|\rho|+m_1(\rho) \\
m_1(\rho)
\end{pmatrix} K_{\underline{\rho}_n}.
\end{equation}
This equation is a direct consequence of Proposition \ref{prop:taille_A_rho_n} and the fact that ${\bf A}_{\underline{\rho}_n}(n)= K_{\underline{\rho}_n}.$

\subsubsection{The algebras $\mathcal{A}^m_\infty$ and $\mathcal{B}^m_{\infty}$}
Consider the family $(\mathcal{B}^m_n, \varphi^m_n)$ where $\varphi^m_n:\mathcal{B}^m_{n+1}\rightarrow \mathcal{B}^m_n$ is the homomorphism defined as follows:
$$\varphi^m_n(d,\omega)=
\left\{
\begin{array}{ll}
  (d,\omega) & \qquad \mathrm{if}\quad d\subset [n]; \\
  0 & \qquad \mathrm{otherwise.}\quad \\
 \end{array}
 \right.$$
Let us denote by $\mathcal{B}^m_\infty$ the projective limit of the algebras $\mathcal{B}^m_n$ with respect to $\varphi^m_n.$
\begin{definition}
An $m$-partial permutation of $\mathbb{N}$ is a pair $(d,\omega)$ where $[m]\subseteq d\subsetneq \mathbb{N}$ is a finite subset of $\mathbb{N}$ and $\omega\in \mathcal{S}_d.$
\end{definition} 
We denote by $\mathfrak{P}^m$ the set of all partial $m$-permutations of $\mathbb{N}.$ An element $b\in \mathcal{B}^m_\infty$ is canonically written as follows :
$$b=\sum_{k=m}^{\infty}\sum_{[m]\subseteq d, \atop{\vert d\vert=k}}\sum_{\omega\in \mathcal{S}_d}b_{d,\omega}(d,\omega),$$
where $b_{d,\omega}$ are complex numbers.

Consider $\mathcal{S}_\mathbb{N},$ the group of permutations of $\mathbb{N}$ with finite support and let $Stab_\mathbb{N}(m)$  be its subgroup consisting of the permutations which fix each element of $[m].$ As previously, $Stab_\mathbb{N}(m)$ acts on $\mathcal{B}^m_\infty$ and we consider $\mathcal{A}^m_\infty=(\mathcal{B}^m_\infty)^{Stab_\mathbb{N}(m)}$ the sub-algebra of invariant elements of $\mathcal{B}^m_\infty$ under the action of $Stab_\mathbb{N}(m).$ An element $b\in \mathcal{B}^m_\infty$ belongs to $\mathcal{A}^m_\infty$ if and only if :
$$b_{d,\omega}=b_{\sigma (d),\sigma\circ\omega\circ \sigma^{-1}},~\forall \sigma\in Stab_\mathbb{N}(m).$$
For an $m$-marked cycle shape $\rho\in \mathcal{P}^m,$ define $A_\rho$ as follows :
$$A_\rho:=\lbrace (d,\omega)\in \mathfrak{P}^m\text{ such that $\rho$ is the $m$-marked cycle type of } (d,\omega) \rbrace.$$
%Soit $b=\sum_{{\overset{d\subseteq \mathbb{N}}{\omega\in \mathcal{S}_{d}}}}b_{d,\omega}(d,\omega)\in \mathcal{B}_\infty$, on a :
%$$b\in \mathcal{A}_\infty \Leftrightarrow b_{d,\omega}=b_{\sigma (d),\sigma\circ\omega\circ \sigma^{-1}},~\forall \sigma\in \mathcal{S}_\infty.$$ 
Any element of $\mathcal{A}^m_\infty$ can be written as an infinite linear combination of  $({\bf A}_\rho)_{\rho \in \mathcal{P}^m},$ where
$${\bf A}_\rho=\sum_{(d,\omega)\in A_\rho}(d,\omega).$$
If $\lambda$ and $\delta$ are two elements of $\mathcal{P}^m,$ the structure coefficients $k_{\lambda\delta}^\rho$ of the algebra $\mathcal{A}^m_\infty$ are defined by the following equation:
\begin{equation}\label{eq:str_coef_A_infini}
{\bf A}_\lambda{\bf A}_\delta=\sum_{\rho \in \mathcal{P}^m}k_{\lambda\delta}^\rho{\bf A}_\rho.
\end{equation}
Let us denote by $\theta^m_n$ the natural homomorphism between $\mathcal{B}^m_\infty$ and $\mathcal{B}^m_n,$
%$$\begin{array}{ccccc}
%\theta_n & : & \mathcal{B}_\infty & \to & \mathcal{B}_n \\
%& & (b_n)_{n\geqslant 1} & \mapsto & b_n \\
%\end{array}$$
%A des isomorphismes près, on peut voir $\theta_n$ comme la morphisme suivante:
$$\begin{array}{ccccc}
\theta^m_n & : & \mathcal{B}^m_\infty & \to & \mathcal{B}^m_n \\
& & \sum_{{\overset{[m]\subseteq d\subsetneq \mathbb{N}}{\omega\in \mathcal{S}_{d}}}}b_{d,\omega}(d,\omega) & \mapsto & \sum_{{\overset{[m]\subseteq d\subseteq [n]}{\omega\in \mathcal{S}_{d}}}}b_{d,\omega}(d,\omega) \\
\end{array}.$$
Remark that: $$\theta^m_n({\bf A}_\rho)={\bf A}_{\rho}(n).$$

As a consequence of Equation \eqref{eq:str_coef_A_infini}, $k_{\lambda\delta}^{\rho}$ are also the structure coefficients of the algebra $\mathcal{A}^m_n$ :
\begin{equation*}
{\bf A}_\lambda(n){\bf A}_\delta(n)=\sum_{\rho\in \mathcal{P}^m_{\leq n}}k_{\lambda\delta}^{\rho}{\bf A}_\rho(n),
\end{equation*}
where $\lambda$ and $\delta$ are two $m$-marked cycle shapes of size less than or equal to $n.$ In particular, the structure coefficients $k_{\lambda\delta}^{\rho}$ of $\mathcal{A}^m_\infty$ are independent of $n,$ and are the structure coefficients of $\mathcal{A}^m_n$ for any $n.$

Let $\rho$ be an $m$-marked cycle shape of size less than or equal to $n.$ Fix an $m$-partial permutation $(d,\omega)$ of $n$ with $\rho$ as an $m$-marked cycle type. The coefficient $k_{\lambda\delta}^\rho$ is the cardinal of the set $E_{\lambda\delta}^\rho(n)$ defined as follows:
\begin{equation*}
E_{\lambda\delta}^\rho(n)=\lbrace((d_1,\omega_1),(d_2,\omega_2))\in A_{\lambda}(n)\times A_{\delta}(n)~\mid~(d_1,\omega_1)\cdot (d_2,\omega_2)=(d,\omega)\rbrace.
\end{equation*}

\begin{cor}\label{deg1}
If $k_{\lambda\delta}^\rho\neq 0,$ then $\vert \rho\vert\leq\vert \lambda\vert+\vert\delta \vert.$
\end{cor}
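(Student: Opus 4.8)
The plan is to read off the bound directly from the combinatorial description of $k_{\lambda\delta}^\rho$ as the cardinality of $E_{\lambda\delta}^\rho(n)$, using only the definition of the product in $\mathfrak{P}^m_n$. The one elementary fact I would record first is that for any $m$-partial permutation $(d',\omega')$, the size of its $m$-marked cycle type is exactly $|d'|$: writing $\omega'\in\mathcal{S}_{d'}$ in cycle notation uses each element of $d'$ precisely once (fixed points included, written as $(a)$), and replacing the entries lying in $[n]\setminus[m]$ by the symbol $\ast$ does not change the number of entries. Consequently, if $\rho$ is the $m$-marked cycle type of $(d,\omega)$ then $|\rho|=|d|$, and similarly $|\lambda|=|d_1|$, $|\delta|=|d_2|$ whenever $(d_1,\omega_1)\in A_\lambda(n)$ and $(d_2,\omega_2)\in A_\delta(n)$.

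Next, since the structure coefficients $k_{\lambda\delta}^\rho$ of $\mathcal{A}^m_\infty$ are independent of $n$, I may fix $n$ large enough that $\lambda,\delta,\rho\in\mathcal{P}^m_{\leq n}$ and $A_\rho(n)\neq\emptyset$, and then pick $(d,\omega)\in A_\rho(n)$. The assumption $k_{\lambda\delta}^\rho\neq 0$ says $E_{\lambda\delta}^\rho(n)\neq\emptyset$, so there exists a pair $\big((d_1,\omega_1),(d_2,\omega_2)\big)\in A_\lambda(n)\times A_\delta(n)$ with $(d_1,\omega_1)\cdot(d_2,\omega_2)=(d,\omega)$. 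By the definition of the product in $\mathfrak{P}^m_n$, the first coordinate of $(d_1,\omega_1)\cdot(d_2,\omega_2)$ is $d_1\cup d_2$; hence $d=d_1\cup d_2$.

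Finally I would conclude with the trivial estimate $|d_1\cup d_2|\leq|d_1|+|d_2|$ combined with the identifications above:
\[
|\rho|=|d|=|d_1\cup d_2|\leq |d_1|+|d_2|=|\lambda|+|\delta|.
\]
(One can even sharpen this to $|\rho|\leq|\lambda|+|\delta|-m$, since $[m]\subseteq d_1\cap d_2$, but only the stated inequality is needed.) I do not expect any real obstacle here; the only point deserving a word of care is the passage to a sufficiently large $n$, which is legitimate precisely because of the $n$-independence of the $k_{\lambda\delta}^\rho$ established just before the statement.
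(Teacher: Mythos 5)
Your proof is correct and follows exactly the paper's argument: nonvanishing of $k_{\lambda\delta}^\rho$ gives a nonempty set $E_{\lambda\delta}^\rho(n)$, the product rule forces $d=d_1\cup d_2$, and $|\rho|=|d_1\cup d_2|\leq |d_1|+|d_2|=|\lambda|+|\delta|$. Your preliminary remark that the size of the $m$-marked cycle type equals $|d'|$, and the sharpening to $|\lambda|+|\delta|-m$, are correct but the core argument is the same as in the paper.
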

\begin{proof}
If $k_{\lambda\delta}^\rho\neq 0,$ we have $E_{\lambda\delta}^\rho(n)\neq \emptyset\Rightarrow \exists d_1,d_2\subset [n]$ such that $\vert d_1\vert=\vert\lambda\vert,\vert d_2\vert=\vert\delta\vert$ and $\vert\rho\vert=\vert d_1\cup d_2\vert\leq \vert d_1\vert+\vert d_2\vert=\vert\lambda\vert+\vert\delta\vert.$
\end{proof}

\begin{cor}\label{cor:sous_alg_de_A_infty}
The set of finite linear combinations of (${\bf{A}}_\lambda$), denoted by $\widetilde{\mathcal{A}^m_\infty},$ is a sub-algebra of $\mathcal{A}^m_\infty.$ The family $({\bf{A}}_\lambda)_{\lambda\in \mathcal{P}^m}$ forms a basis for $\widetilde{\mathcal{A}^m_\infty}.$
\end{cor}

\subsubsection{A proof of the polynomiality property}

\begin{definition}
    An $m$-marked cycle shape $\rho$ is said to be proper if $m_1(\rho)=0.$ The set of all proper $m$-marked cycle shape will be denoted $\mathcal{PP}^m.$
\end{definition}

If $\rho$ is an $m$-marked cycle shape of size less than or equal to $n,$ then for any $0\leq k\leq n-|\rho|,$ we define $\rho^k$ to be the following $m$-marked cycle shape of size $|\rho|+k:$
$$\rho^k:=\rho \underbrace{(\ast)(\ast)\cdots (\ast)}_{\text{$k$ times}}.$$
It would be clear that $m_1(\rho^k)=m_1(\rho)+k.$

\begin{lem}\label{Lemma}
    If $\rho$ is an $m$-marked cycle shape of size less than or equal to $n,$ then for any $0\leq k\leq n-|\rho|,$ we have
    \begin{equation*}
K_{\underline{\rho}_n}=K_{\underline{\rho^k}_n}.
\end{equation*}
\end{lem}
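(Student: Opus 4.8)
The plan is to observe that $\underline{\rho}_n$ and $\underline{\rho^k}_n$ are in fact \emph{the same} $m$-marked cycle shape of size $n$; once this is established the equality $K_{\underline{\rho}_n}=K_{\underline{\rho^k}_n}$ is immediate, since the $m$-class sum $K_\mu$ depends only on $\mu\in\mathcal{P}^m_{n}$.

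To carry this out, first note that the hypothesis $0\leq k\leq n-|\rho|$ guarantees $|\rho^k|=|\rho|+k\leq n$, so $\rho^k\in\mathcal{P}^m_{\leq n}$ and $\underline{\rho^k}_n$ is well defined. By definition, $\underline{\rho^k}_n$ is obtained from $\rho^k$ by adjoining $n-|\rho^k|=n-|\rho|-k$ cycles of the form $(\ast)$. But $\rho^k$ is by definition $\rho$ with $k$ cycles $(\ast)$ adjoined, so $\underline{\rho^k}_n$ is $\rho$ with $k+(n-|\rho|-k)=n-|\rho|$ cycles $(\ast)$ adjoined. This is precisely the description of $\underline{\rho}_n$. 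Hence $\underline{\rho^k}_n=\underline{\rho}_n$ as elements of $\mathcal{P}^m_{n}$, and therefore $K_{\underline{\rho}_n}=K_{\underline{\rho^k}_n}$.

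There is essentially no obstacle here: the only point requiring care is the bookkeeping of how many $(\ast)$-cycles are adjoined at each stage, together with the verification that all the objects involved ($\rho^k$ and $\underline{\rho^k}_n$) are legitimate $m$-marked cycle shapes under the stated range of $k$. One could also phrase the argument by noting that "adjoining $(\ast)$-cycles" composes additively—passing from $\rho$ to $\rho^k$ and then to $\underline{\rho^k}_n$ adds the same total number of $(\ast)$-cycles as passing directly from $\rho$ to $\underline{\rho}_n$—but the counting argument above is the cleanest way to make this precise.
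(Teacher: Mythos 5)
Your proof is correct and is exactly the argument the paper leaves implicit (the lemma is stated there without proof): since $\underline{\rho^k}_n$ and $\underline{\rho}_n$ both equal $\rho$ with $n-|\rho|$ cycles $(\ast)$ adjoined, they coincide as elements of $\mathcal{P}^m_n$, so the class sums agree. Your bookkeeping of the $(\ast)$-cycles and the check that $\rho^k\in\mathcal{P}^m_{\leq n}$ are precisely the points that need to be (and are) verified.
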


\begin{theoreme}
Let $\lambda, \delta$ and $\rho$ be three proper $m$-marked cycle shapes and let $n\geq m$ be a natural number satisfying $n\geq |\lambda|, |\delta|,|\rho|.$ The structure coefficients $c_{\underline{\lambda}_n\underline{\delta}_n}^{\underline{\rho}_n}(n)$ defined by Equation \eqref{MainEq} can be written as follows :
\begin{equation*}
c_{\underline{\lambda}_n\underline{\delta}_n}^{\underline{\rho}_n}(n)=\sum_{k=0}^{n-\vert\rho\vert}k_{\lambda\delta}^{\rho^k}\begin{pmatrix}
n-\vert\rho\vert \\
k
\end{pmatrix}, 
\end{equation*}
where $k_{\lambda\delta}^{\rho^k}$ are integers independent of $n.$
\end{theoreme}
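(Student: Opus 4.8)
The plan is to relate the two structure-coefficient families through the homomorphism $\psi:\mathcal{A}^m_n\to Z_{n,m}$ and the explicit formula \eqref{imagepsi}. First I would apply $\psi$ to the identity ${\bf A}_\lambda(n){\bf A}_\delta(n)=\sum_{\rho'\in\mathcal{P}^m_{\leq n}}k_{\lambda\delta}^{\rho'}{\bf A}_{\rho'}(n)$, so that the left side becomes $\psi({\bf A}_\lambda(n))\psi({\bf A}_\delta(n))$. Since $\lambda$ and $\delta$ are proper, $m_1(\lambda)=m_1(\delta)=0$, hence $\psi({\bf A}_\lambda(n))=K_{\underline{\lambda}_n}$ and $\psi({\bf A}_\delta(n))=K_{\underline{\delta}_n}$ by \eqref{imagepsi}; thus the left side is exactly $K_{\underline{\lambda}_n}K_{\underline{\delta}_n}=\sum_{\mu\in\mathcal{P}^m_n}c_{\underline{\lambda}_n\underline{\delta}_n}^{\mu}(n)K_\mu$. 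On the right side, Corollary \ref{deg1} tells us only cycle shapes $\rho'$ with $|\rho'|\leq|\lambda|+|\delta|\leq 2n$ occur, and \eqref{imagepsi} gives $\psi({\bf A}_{\rho'}(n))=\binom{n-|\rho'|+m_1(\rho')}{m_1(\rho')}K_{\underline{\rho'}_n}$.

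The key combinatorial move is to organise the sum over $\rho'$ according to which proper shape it ``comes from''. Every $m$-marked cycle shape $\rho'$ of size $\leq n$ is of the form $\eta^k$ for a unique proper shape $\eta$ (namely $\eta$ obtained by deleting all $(\ast)$-cycles of $\rho'$) and a unique $k=m_1(\rho')\geq 0$; moreover $|\eta^k|=|\eta|+k\leq n$. By Lemma \ref{Lemma}, $K_{\underline{\eta^k}_n}=K_{\underline{\eta}_n}$, so all the terms $\psi({\bf A}_{\eta^k}(n))$ for varying $k$ are scalar multiples of the \emph{same} class sum $K_{\underline{\eta}_n}$. Collecting, the coefficient of $K_{\underline{\eta}_n}$ on the right side is
\begin{equation*}
\sum_{k=0}^{n-|\eta|}k_{\lambda\delta}^{\eta^k}\binom{n-|\eta^k|+m_1(\eta^k)}{m_1(\eta^k)}=\sum_{k=0}^{n-|\eta|}k_{\lambda\delta}^{\eta^k}\binom{n-|\eta|}{k},
\end{equation*}
using $|\eta^k|=|\eta|+k$ and $m_1(\eta^k)=k$ to simplify the binomial to $\binom{n-|\eta|}{k}$. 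Taking $\eta=\rho$, the desired proper shape, and comparing the coefficient of $K_{\underline{\rho}_n}$ on both sides (the basis $(K_\mu)_{\mu\in\mathcal{P}^m_n}$ of $Z_{n,m}$ being free) yields exactly the claimed formula, with the $k_{\lambda\delta}^{\rho^k}$ being the structure coefficients of $\mathcal{A}^m_\infty$, hence integers independent of $n$.

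The one point that needs care — and the place I expect the only real friction — is the bookkeeping on ranges of summation: one must check that every $\mu\in\mathcal{P}^m_n$ appearing on the left with nonzero $c_{\underline{\lambda}_n\underline{\delta}_n}^{\mu}(n)$ is of the form $\underline{\eta}_n$ for a proper $\eta$ with $|\eta|\leq n$ (true, since every shape of size exactly $n$ is $\underline{\eta}_n$ for its own ``deproperisation'' $\eta$), that distinct proper $\eta$ give distinct $K_{\underline{\eta}_n}$ (true, as $\underline{\eta}_n$ determines $\eta$), and that the index $k$ in $k_{\lambda\delta}^{\rho^k}$ indeed ranges over $0\leq k\leq n-|\rho|$ with no contributions lost — any $\rho^k$ with $|\rho^k|>n$ simply does not index a class in $\mathcal{P}^m_{\leq n}$, and since $k_{\lambda\delta}^{\rho^k}$ is independent of $n$ by Corollary \ref{cor:sous_alg_de_A_infty}, enlarging $n$ only appends terms with the stated binomial coefficient $\binom{n-|\rho|}{k}$, which vanishes once $k>n-|\rho|$ anyway. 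Assembling these observations completes the proof.
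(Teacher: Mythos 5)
Your proposal is correct and follows essentially the same route as the paper: apply $\psi$ to the expansion of ${\bf A}_\lambda(n){\bf A}_\delta(n)$ in $\mathcal{A}^m_n$, use Equation \eqref{imagepsi} (with $m_1(\lambda)=m_1(\delta)=0$) on the left and on each ${\bf A}_{\rho'}(n)$ on the right, group terms by the proper shape underlying each $\rho'=\rho^k$ via Lemma \ref{Lemma}, simplify the binomial coefficient to $\binom{n-|\rho|}{k}$, and compare coefficients in the basis of $m$-class sums. Your extra bookkeeping remarks (uniqueness of the decomposition $\rho'=\eta^k$, distinctness of the $K_{\underline{\eta}_n}$, range of $k$) are sound and only make explicit what the paper leaves implicit.
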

\begin{proof}
By Equation \eqref{imagepsi}, we have : $\psi({\bf A}_\lambda(n))=K_{\underline{\lambda}_n}$ and $\psi({\bf A}_\delta(n))=K_{\underline{\delta}_n}.$
Consider in $\mathcal{A}^m_n$ the following equation:
\begin{equation*}
{\bf A}_\lambda(n){\bf A}_\delta(n)=\sum_{\tau\in \mathcal{P}^m_{\leq n}\atop{ |\tau|\leq |\lambda|+|\delta|}}k_{\lambda\delta}^{\tau}{\bf A}_\tau(n).
\end{equation*}
By applying $\psi,$ we get :
\begin{equation*}
K_{\underline{\lambda}_n}K_{\underline{\delta}_n}=\sum_{\tau\in \mathcal{P}^m_{\leq n}\atop{ |\tau|\leq |\lambda|+|\delta|}}k_{\lambda\delta}^{\tau}\begin{pmatrix}
n-|\tau|+m_1(\tau) \\
m_1(\tau)
\end{pmatrix}K_{\underline{\tau}_n}.
\end{equation*}

Therefore, by using Lemma \ref{Lemma},the right hand side of the above equation can be written
$$\sum_{\rho \in \mathcal{PP}^m_{\leq n}\atop{|\rho|\leq |\lambda|+|\delta|}} \left[\sum_{k=0}^{n-\vert\rho\vert}k_{\lambda\delta}^{\rho^k}\begin{pmatrix}
n-\vert\rho^k\vert+m_1(\rho^k) \\
m_1(\rho^k)
\end{pmatrix}\right] K_{\underline{\rho}_n}.$$
After simplification, we get :
$$\sum_{\rho \in \mathcal{PP}^m_{\leq n}\atop{|\rho|\leq |\lambda|+|\delta|}} \left[\sum_{k=0}^{n-\vert\rho\vert}k_{\lambda\delta}^{\rho^k}\begin{pmatrix}
n-\vert\rho\vert \\
k
\end{pmatrix} \right] K_{\underline{\rho}_n},$$
and the result follows.
\end{proof}
\begin{cor}\label{cor:deg_cof_str_S_n}
Let $\lambda$ and $\delta$ be two proper $m$-marked cycle shapes. For any proper $m$-marked cycle shape $\rho$ of size less than or equal to $|\lambda|+|\delta|$ and for any $n\geq |\lambda|,|\delta|,|\rho|$ we have:
\begin{equation*}
\deg(c_{\underline{\lambda}_n\underline{\delta}_n}^{\underline{\rho}_n}(n))=\max_{\overset{0\leq k\leq n-\vert\rho\vert}{k_{\lambda\delta}^{\rho^k}\neq 0}}\underbrace{\deg\left( \begin{pmatrix}
n-\vert\rho\vert \\
k
\end{pmatrix} \right) }_{k}=\displaystyle \max_{\overset{0\leq k\leq n-\vert\rho\vert}{k_{\lambda\delta}^{\rho^k}\neq 0}}k.
\end{equation*}
\end{cor}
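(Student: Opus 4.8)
The strategy is simply to read off the degree from the closed formula established in the preceding Theorem, namely
\begin{equation*}
c_{\underline{\lambda}_n\underline{\delta}_n}^{\underline{\rho}_n}(n)=\sum_{k=0}^{n-\vert\rho\vert}k_{\lambda\delta}^{\rho^k}\binom{n-\vert\rho\vert}{k},
\end{equation*}
where the $k_{\lambda\delta}^{\rho^k}$ are integers independent of $n$. The first observation I would make is that, for each fixed $k\geq 0$, the map $n\mapsto \binom{n-\vert\rho\vert}{k}=\frac{(n-\vert\rho\vert)(n-\vert\rho\vert-1)\cdots(n-\vert\rho\vert-k+1)}{k!}$ is a polynomial in $n$ of degree exactly $k$, with leading coefficient $1/k!$. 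In particular these polynomials, as $k$ ranges over the non-negative integers, have pairwise distinct degrees and are therefore linearly independent; consequently $c_{\underline{\lambda}_n\underline{\delta}_n}^{\underline{\rho}_n}(n)$ is itself a polynomial in $n$, and its degree can be computed termwise.

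Next I would note that, by Corollary \ref{deg1}, $k_{\lambda\delta}^{\rho^k}\neq 0$ forces $\vert\rho^k\vert=\vert\rho\vert+k\leq \vert\lambda\vert+\vert\delta\vert$, so the indices $k$ that actually contribute to the sum lie in the fixed finite range $0\leq k\leq \vert\lambda\vert+\vert\delta\vert-\vert\rho\vert$, irrespective of $n$. Hence for $n$ large enough the right-hand side above is a fixed linear combination of the polynomials $\binom{n-\vert\rho\vert}{k}$, and since these are linearly independent with $\deg\binom{n-\vert\rho\vert}{k}=k$, the degree of $c_{\underline{\lambda}_n\underline{\delta}_n}^{\underline{\rho}_n}(n)$ equals the largest $k$ for which $k_{\lambda\delta}^{\rho^k}\neq 0$. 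This is precisely the claimed equality, the last equality in the statement being nothing more than $\deg\binom{n-\vert\rho\vert}{k}=k$.

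I do not expect a genuine obstacle here. The only point that deserves a word of care is the a priori possibility of cancellation among the leading terms of the various $\binom{n-\vert\rho\vert}{k}$; this is ruled out by the distinct-degrees argument above, and in any case it cannot occur because each $k_{\lambda\delta}^{\rho^k}=\vert E_{\lambda\delta}^{\rho^k}(n)\vert\geq 0$, so for $n$ large every summand is non-negative and the top one does not vanish. I would also make explicit that, because the contributing $k$ form a fixed finite set, the maximum on the right-hand side stabilises and is the same for all sufficiently large $n$, so the degree formula is well posed.
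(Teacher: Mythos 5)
Your proof is correct and follows exactly the route the paper intends: the corollary is read off from the theorem's formula $c_{\underline{\lambda}_n\underline{\delta}_n}^{\underline{\rho}_n}(n)=\sum_k k_{\lambda\delta}^{\rho^k}\binom{n-\vert\rho\vert}{k}$, using that $\binom{n-\vert\rho\vert}{k}$ has degree $k$ in $n$ and that the coefficients $k_{\lambda\delta}^{\rho^k}$ are non-negative integers independent of $n$ (with Corollary \ref{deg1} bounding the contributing $k$), so the top-degree term cannot cancel. Your explicit remarks on linear independence and non-negativity simply make precise what the paper leaves implicit; there is no gap.
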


\begin{ex}
    Let $\lambda = (1,2,\ast)$ and $\delta = (1,\ast)(2,\ast)(\ast,\ast).$ We have:
    \begin{equation*}
{\bf A}_\lambda{\bf A}_\delta= {\bf A}_{(1)(2,\ast,\ast)(\ast,\ast)} + {\bf A}_{(1,\ast,2)(\ast,\ast)(\ast)} + {\bf A}_{(1,\ast,2,\ast,\ast,\ast)}+{\bf A}_{(1,\ast,2,\ast,\ast)(\ast,\ast)}.
\end{equation*}
For $n\geq 7,$ we have:
$${\bf A}_\lambda(n){\bf A}_\delta(n)={\bf A}_{(1)(2,\ast,\ast)(\ast,\ast)}(n) + {\bf A}_{(1,\ast,2)(\ast,\ast)(\ast)}(n) + {\bf A}_{(1,\ast,2,\ast,\ast,\ast)}(n)$$
$$+{\bf A}_{(1,\ast,2,\ast,\ast)(\ast,\ast)}(n).$$
and after applying $\psi$ we obtain the following equation:
$$K_{\underline{\lambda}_n}K_{\underline{\delta}_n}=K_{\underline{(1)(2,\ast,\ast)(\ast,\ast)}_n} + (n-5) K_{\underline{(1,\ast,2)(\ast,\ast)}_n} + K_{\underline{(1,\ast,2,\ast,\ast,\ast)}_n}$$
$$+ K_{\underline{(1,\ast,2,\ast,\ast)(\ast,\ast)}_n}.$$
\end{ex}

\begin{ex}\label{ex:main}
    Let $\lambda = (1,2)(\ast,\ast)$ and $\delta = (1)(2)(\ast,\ast).$ We have:
    \begin{equation*}
{\bf A}_\lambda{\bf A}_\delta= 2{\bf A}_{(1,2)(\ast,\ast)(\ast,\ast)} + 3{\bf A}_{(1,2)(\ast,\ast,\ast)} + {\bf A}_{(1,2)(\ast)(\ast)}.
\end{equation*}
For $n\geq 6,$ we have:
\begin{equation*}
{\bf A}_\lambda(n){\bf A}_\delta(n)=2{\bf A}_{(1,2)(\ast,\ast)(\ast,\ast)}(n) + 3{\bf A}_{(1,2)(\ast,\ast,\ast)}(n) + {\bf A}_{(1,2)(\ast)(\ast)}(n)
\end{equation*}
and after applying $\psi$ we obtain the following equation:
\begin{eqnarray*}
K_{\underline{\lambda}_n}K_{\underline{\delta}_n}&=&2K_{\underline{(1,2)(\ast,\ast)(\ast,\ast)}_n} + 3K_{\underline{(1,2)(\ast,\ast,\ast)
}_n} + {n-4+2 \choose 2} K_{\underline{(1,2)(\ast)(\ast)}_n}\\
&=&2K_{\underline{(1,2)(\ast,\ast)(\ast,\ast)}_n} + 3K_{\underline{(1,2)(\ast,\ast,\ast)
}_n} + \frac{(n-2)(n-3)}{2} K_{\underline{(1,2)}_n}
\end{eqnarray*}

\end{ex}

\subsection{Filtrations on $\mathcal{A}^m_\infty$}\label{sec:filt}

By Corollary \ref{deg1}, it would be clear that
$$\deg_1 ({\bf A}_\rho) = |\rho|,$$
defines a filtration on $\mathcal{A}^m_\infty.$ Inspired by \cite[Proposition 10.1]{IvanovKerov}, one can show the following proposition. 

\begin{prop}\label{filt_deg_2}
The function $\deg_{2}$ defined by:
$$\deg_2(d,\omega)=\vert d \vert+\vert m_1(\omega)\vert,$$
where $m_1(\omega)$ is the set of fixed points of $\omega$ which do not belong to $[m]$ is a filtration.
\end{prop}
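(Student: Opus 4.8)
The plan is to show that $\deg_2$ respects products, i.e.\ that for any two $m$-partial permutations $(d_1,\omega_1)$ and $(d_2,\omega_2)$, writing $(d,\omega)=(d_1,\omega_1)\cdot(d_2,\omega_2)$, we have $\deg_2(d,\omega)\leq \deg_2(d_1,\omega_1)+\deg_2(d_2,\omega_2)$, and then to transfer this to the basis $({\bf A}_\rho)$ of $\mathcal{A}^m_\infty$ by setting $\deg_2({\bf A}_\rho)=|\rho|+m_1(\rho)$. The subadditivity on $\mathcal{B}^m_\infty$ follows once it is established on partial permutations, since products of basis elements are basis elements; and it descends to $\mathcal{A}^m_\infty$ because $\deg_2$ is constant on each conjugacy class $A_\rho$ (the quantities $|d|$ and $|m_1(\omega)|$ are invariant under the $Stab_\mathbb{N}(m)$-action), so ${\bf A}_\lambda{\bf A}_\delta=\sum_\rho k_{\lambda\delta}^\rho{\bf A}_\rho$ is supported on $\rho$ with $\deg_2({\bf A}_\rho)\leq \deg_2({\bf A}_\lambda)+\deg_2({\bf A}_\delta)$.

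The core inequality is the one on partial permutations. First recall $d=d_1\cup d_2$, so $|d|\leq |d_1|+|d_2|$, but this crude bound is not enough on its own; I would instead argue on the two pieces of $d$ separately. Write $\supp(\omega)=d\setminus([m]\cup m_1(\omega))$ for the set of genuinely moved points outside $[m]$, so that $|d|+|m_1(\omega)| = |d| + |d| - m - |\supp(\omega)| = 2|d| - m - |\supp(\omega)|$; equivalently $\deg_2(d,\omega)=|d|+(|d|-m-|\supp(\omega)|)$. The key combinatorial fact is that a point $a\in d$ with $a\notin[m]$ is fixed by $\omega$ only if it is fixed by both $\widetilde{\omega}_1$ and $\widetilde{\omega}_2$ on $d$, unless $a$ lies in the "overlap" $d_1\cap d_2$ or is created as a new moved point whose orbit involves points from both $d_1$ and $d_2$. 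Concretely: if $a\in d_1\setminus d_2$ then $\widetilde{\omega}_2(a)=a$, so $\omega(a)=\widetilde{\omega}_1(\widetilde{\omega}_2(a))=\omega_1(a)$; hence $a$ is moved by $\omega$ iff it is moved by $\omega_1$. Symmetrically for $a\in d_2\setminus d_1$. Thus the only moved points of $\omega$ that are \emph{not} already moved points of $\omega_1$ or of $\omega_2$ must lie in $d_1\cap d_2$. Summing: $|\supp(\omega)|\geq$ (number of moved points of $\omega$ outside $[m]$), and every such point is either in $m_1(\omega_1)^c$-support of $\omega_1$, or support of $\omega_2$, or in $d_1\cap d_2$. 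Running the bookkeeping through $\deg_2(d,\omega)=2|d|-m-|\supp(\omega)|$ and $|d|\le |d_1|+|d_2|-|d_1\cap d_2|$ should collapse to $\deg_2(d,\omega)\le \deg_2(d_1,\omega_1)+\deg_2(d_2,\omega_2)$; the $|d_1\cap d_2|$ terms are exactly what is needed to absorb the extra moved points that can appear in the overlap, and the two $m$'s on the right combine to cover the single $m$ on the left plus slack.

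The step I expect to be the main obstacle is the careful accounting of fixed points of $\omega$ lying in $d_1\cap d_2$: such a point can be a fixed point of $\omega$ while being moved by both $\omega_1$ and $\omega_2$ (their contributions cancel), or moved by $\omega$ while fixed by one of them, and one must check that these cases never make $|\supp(\omega)|$ smaller than the bound $|\supp(\omega_1)|+|\supp(\omega_2)|-(\text{contribution from }d_1\cap d_2)$ requires. The clean way to handle this is to observe that $\supp(\widetilde\omega)\subseteq \supp(\widetilde\omega_1)\cup\supp(\widetilde\omega_2)$ as subsets of $\mathbb N$ (a point fixed by both $\widetilde\omega_1$ and $\widetilde\omega_2$ is fixed by their composition, for the composition of permutations of $\mathbb N$), which gives directly $m+|\supp(\omega)|=|\supp^m(\widetilde\omega)|\le |\supp^m(\widetilde\omega_1)\cup\supp^m(\widetilde\omega_2)|$ with $\supp^m(\widetilde\omega_i)\subseteq d_i$; combining with $|d|\le|d_1|+|d_2|-|d_1\cap d_2|$ and the inclusion–exclusion for the union of the two supports finishes the inequality. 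Once the numerical inequality is in hand, the extension by linearity and the restriction to $\mathcal{A}^m_\infty$ are routine, exactly as for $\deg_1$ via Corollary~\ref{deg1}.
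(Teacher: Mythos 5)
Your overall plan (prove subadditivity at the level of $m$-partial permutations, then transfer it to the basis $({\bf A}_\rho)$ since $\deg_2$ is constant on conjugacy classes) is sound, and your observation that for $a\in d_1\setminus d_2$ one has $\widetilde{\omega}_2(a)=a$, hence $\omega(a)=\omega_1(a)$, is precisely the key fact. The gap is in the step you call the ``clean way''. Writing $S=\supp^m(\widetilde{\omega})$, $S_i=\supp^m(\widetilde{\omega}_i)$, your own reformulation gives $\deg_2(d,\omega)=2|d|-|S|$, and since $|d|=|d_1|+|d_2|-|d_1\cap d_2|$ exactly, the inequality $\deg_2(d,\omega)\le\deg_2(d_1,\omega_1)+\deg_2(d_2,\omega_2)$ is equivalent to $|S_1|+|S_2|\le |S|+2|d_1\cap d_2|$, i.e.\ it requires a \emph{lower} bound on $|S|$, equivalently an \emph{upper} bound on the number of fixed points of $\omega$. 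But the inclusion $\supp(\widetilde{\omega})\subseteq\supp(\widetilde{\omega}_1)\cup\supp(\widetilde{\omega}_2)$ you invoke yields only $|S|\le|S_1\cup S_2|$, an upper bound on $|S|$ (a lower bound on $|m_1(\omega)|$), so it cannot finish the argument: it controls points moved by $\omega$ though fixed by a factor, whereas the dangerous points are those \emph{fixed} by $\omega$ although moved by the factors (the cancellation case you correctly flagged as the obstacle and then left unresolved).

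The repair is short and uses the ingredient you already have. Either argue directly on fixed points: $m_1(\omega)\cap(d_1\setminus d_2)= m_1(\omega_1)\cap (d_1\setminus d_2)\subseteq m_1(\omega_1)$ and symmetrically for $d_2\setminus d_1$ (your $d_1\setminus d_2$ observation), while $m_1(\omega)\cap(d_1\cap d_2)$ has size at most $|d_1\cap d_2|-m$; summing and using $|d|=|d_1|+|d_2|-|d_1\cap d_2|$ gives $\deg_2(d,\omega)\le\deg_2(d_1,\omega_1)+\deg_2(d_2,\omega_2)-m$, even slightly stronger than needed. Equivalently, in the support language: a point fixed by $\widetilde{\omega}_1\widetilde{\omega}_2$ but moved by one factor must be moved by both (if $\widetilde{\omega}_2(x)=y\neq x$ and $\widetilde{\omega}_1(y)=x$, injectivity forces $\widetilde{\omega}_1(x)\neq x$), so $(S_1\cup S_2)\setminus S\subseteq d_1\cap d_2$, giving $|S_1\cup S_2|\le |S|+|d_1\cap d_2|$, which combined with $|S_1\cap S_2|\le|d_1\cap d_2|$ yields the required bound. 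Note finally that the paper itself gives no details here: it simply refers to \cite[Proposition 10.1]{IvanovKerov}, whose proof is exactly this overlap bookkeeping, so with the correction your write-up would be a legitimate self-contained version of that argument.
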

\begin{proof}
    The proof is similar to the proof of Proposition 10.1 in \cite{IvanovKerov}.
\end{proof}

If $\deg$ is a filtration on $\mathcal{A}^m_\infty$, then the equation
$${\bf A}_\lambda {\bf A}_\delta=\sum_\rho k_{\lambda\delta}^\rho {\bf A}_\rho,$$
implies  
\begin{equation}\label{eq:filtr}
\deg({\bf A}_\lambda {\bf A}_\delta):=\max_{\rho, k_{\lambda\delta}^\rho\neq 0} \deg({\bf A}_\rho)\leq \deg({\bf A}_\lambda)+ \deg ({\bf A}_\delta).
\end{equation}
\begin{cor}\label{prop:borne}
If $\lambda, \delta$ and $\rho$ are three proper $m$-marked cycle shapes and $n\geq m$ is a natural number satisfying $n\geq |\lambda|, |\delta|,|\rho|,$ then 
$$\deg(c_{\underline{\lambda}_n\underline{\delta}_n}^{\underline{\rho}_n}(n))\leq \frac{|\lambda|+|\delta|-|\rho|}{2}.$$
\end{cor}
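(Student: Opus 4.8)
The plan is to combine Corollary~\ref{cor:deg_cof_str_S_n}, which already reduces the degree of $c_{\underline{\lambda}_n\underline{\delta}_n}^{\underline{\rho}_n}(n)$ to a purely combinatorial quantity, with the second filtration $\deg_2$ of Proposition~\ref{filt_deg_2}. Recall that Corollary~\ref{cor:deg_cof_str_S_n} gives
\[
\deg\bigl(c_{\underline{\lambda}_n\underline{\delta}_n}^{\underline{\rho}_n}(n)\bigr)=\max_{\substack{0\leq k\leq n-|\rho|\\ k_{\lambda\delta}^{\rho^k}\neq 0}}k,
\]
so it suffices to bound every index $k$ for which $k_{\lambda\delta}^{\rho^k}\neq 0$ by $\tfrac{|\lambda|+|\delta|-|\rho|}{2}$.

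First I would record how $\deg_2$ acts on the basis elements ${\bf A}_\tau$. Since $\deg_2(d,\omega)=|d|+|m_1(\omega)|$ depends only on the cardinality of $d$ and on the number of fixed points of $\omega$ outside $[m]$, and both are invariant under the $Stab(m)$-action, $\deg_2$ is constant on each conjugacy class $A_\tau$; for $(d,\omega)\in A_\tau$ we have $|d|=|\tau|$ and $|m_1(\omega)|=m_1(\tau)$, hence $\deg_2({\bf A}_\tau)=|\tau|+m_1(\tau)$. Now specialise: because $\lambda$ and $\delta$ are proper, $m_1(\lambda)=m_1(\delta)=0$, so $\deg_2({\bf A}_\lambda)=|\lambda|$ and $\deg_2({\bf A}_\delta)=|\delta|$; and since $\rho$ is proper, $|\rho^k|=|\rho|+k$ while $m_1(\rho^k)=m_1(\rho)+k=k$, giving $\deg_2({\bf A}_{\rho^k})=(|\rho|+k)+k=|\rho|+2k$.

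Next I would invoke the subadditivity of a filtration, Equation~\eqref{eq:filtr}, applied with $\deg=\deg_2$: whenever $k_{\lambda\delta}^{\rho^k}\neq 0$ the term ${\bf A}_{\rho^k}$ appears in the product ${\bf A}_\lambda{\bf A}_\delta$, so
\[
|\rho|+2k=\deg_2({\bf A}_{\rho^k})\leq \deg_2({\bf A}_\lambda)+\deg_2({\bf A}_\delta)=|\lambda|+|\delta|,
\]
which rearranges to $k\leq \tfrac{|\lambda|+|\delta|-|\rho|}{2}$. Taking the maximum over all such $k$ and using the formula from Corollary~\ref{cor:deg_cof_str_S_n} yields the claimed bound on $\deg\bigl(c_{\underline{\lambda}_n\underline{\delta}_n}^{\underline{\rho}_n}(n)\bigr)$.

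There is no real obstacle here beyond bookkeeping: the substantive input (that $\deg_2$ really is a filtration on $\mathcal{A}^m_\infty$) is Proposition~\ref{filt_deg_2}, whose proof is deferred to the analogue in \cite{IvanovKerov}. The only point demanding a little care is the identification $|m_1(\omega)|=m_1(\tau)$ for $(d,\omega)\in A_\tau$ — i.e.\ that the fixed points of $\omega$ lying outside $[m]$ are precisely the cycles of $\tau$ of the form $(\ast)$ — together with the observation that $m_1$ of the "padded" shape $\rho^k$ increases by exactly $k$, which is immediate from the definition of $\rho^k$.
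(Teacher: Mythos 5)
Your proposal is correct and follows essentially the same route as the paper: reduce to bounding the indices $k$ with $k_{\lambda\delta}^{\rho^k}\neq 0$ via Corollary~\ref{cor:deg_cof_str_S_n}, then apply the filtration inequality \eqref{eq:filtr} with $\deg_2$ to get $|\rho|+2k\leq|\lambda|+|\delta|$. The only difference is that you spell out explicitly why $\deg_2({\bf A}_\tau)=|\tau|+m_1(\tau)$ and how properness of $\lambda,\delta,\rho$ enters, which the paper leaves implicit.
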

\begin{proof}
By Corollary \ref{cor:deg_cof_str_S_n}, we have :
$$\deg(c_{\underline{\lambda}_n\underline{\delta}_n}^{\underline{\rho}_n}(n))=\max_{\overset{0\leq k\leq n-\vert\rho\vert}{k_{\lambda\delta}^{\rho^k}\neq 0}}k.$$
Using the filtration $\deg_2,$ Equation \eqref{eq:filtr} gives:
$$\deg_2({\bf A}_{\rho^k})\leq \max_{\rho, k_{\lambda\delta}^\rho\neq 0} \deg_2({\bf A}_\rho)\leq \deg_2({\bf A}_\lambda)+ \deg_2 ({\bf A}_\delta),$$
for any $k\geq 0$ satisfying $k_{\lambda\delta}^{\rho^k}\neq 0.$ Therefore, for any $k\geq 0$ satisfying $k_{\lambda\delta}^{\rho^k}\neq 0,$ we have:
$$|\rho|+2k\leq |\lambda|+|\delta|.$$
The result follows.
\end{proof}

\begin{ex}
    For $\lambda=(1,2)(\ast,\ast),$ $\delta=(1)(2)(\ast,\ast)$ and $\rho=(1,2),$ using Corollary \ref{prop:borne}, we obtain that the degree of $c_{\underline{\lambda}_n\underline{\delta}_n}^{\underline{\rho}_n}(n)$ is bounded by $3.$ The degree of $c_{\underline{\lambda}_n\underline{\delta}_n}^{\underline{\rho}_n}(n)$ is $2$ as obtained in Example \ref{ex:main}. For $\mu=(1,2,\ast,\ast),$ $\gamma=(1)(2,\ast,\ast)$ and $\nu=(1,2,\ast)(\ast,\ast),$ using Corollary \ref{prop:borne}, we obtain that the degree of $c_{\underline{\mu}_n\underline{\gamma}_n}^{\underline{\nu}_n}(n)$ is bounded by $1.$
\end{ex}

\end{document}